\theoremstyle{plain}
\newtheorem{thm}{Theorem}[section]
\newtheorem{lem}[thm]{Lemma}
\newtheorem{cor}[thm]{Corollary}
\theoremstyle{definition}
\theoremstyle{remark}
\title{Bogomolov multipliers for unitriangular groups}
\author{Ivo M. Michailov}
\address{Faculty of Mathematics and Informatics, Shumen University "Episkop Konstantin Preslavski", Universitetska str. 115, 9700 Shumen, Bulgaria}
\email{ivo\_michailov@yahoo.com}
\date{\today}
\keywords{Bogomolov multiplier, unitriangular groups}
\subjclass[2000]{primary 14E08, 14M20, 13A50}
\thanks{This work is partially supported by a project No RD-08-241/12.03.2013 of Shumen University}
\begin{document}
\baselineskip 20pt
\newcommand{\Gal}{{\rm Gal}}
\newcommand{\im}{{\rm im}}
\newcommand{\res}{{\rm res}}
\newcommand{\GL}{{\rm GL}}
\newcommand{\Br}{{\rm Br}}
\newcommand{\lcm}{{\rm lcm}}
\newcommand{\ord}{{\rm ord}}
\newcommand{\ut}{\text{UT}_n(\mathbb F_{p})}
\newcommand{\f}{\mathbb F_{p}}
\renewcommand{\thefootnote}{\fnsymbol{footnote}}
\numberwithin{equation}{section}
\begin{abstract}
The Bogomolov multiplier $B_0(G)$ of a finite group $G$ is defined
as the subgroup of the Schur multiplier consisting of the cohomology
classes vanishing after restriction to all abelian subgroups of $G$.
In this paper we give a positive answer to an open problem posed by
Kang and Kunyavski\u{i} in \cite{KK}. Namely, we prove that if $G$
is either a unitriangular group over $\f$, a quotient of its lower
central series, a subgroup of its lower central series, or a central
product of two unitriangular groups, then $B_0(G)=0$.
\end{abstract}

\maketitle

\section{Introduction}
\label{1}

Let $K$ be a field, $G$ a finite group and $V$ a faithful
representation of $G$ over $K$. Then there is a natural action of
$G$ upon the field of rational functions $K(V)$. The rationality
problem then asks whether the field of $G$-invariant functions
$K(V)^G$ is rational (i.e., purely transcendental) over $K$. A
question related to the above mentioned is whether $K(V)^G$ is
stably rational, that is, whether there exist independent variables
$x_1,\dots,x_r$ such that $K(V)^G(x_1,\dots,x_r)$ becomes a purely
transcendental extension of $K$. This problem has close connection
with L\"uroth's problem \cite{Sh} and the inverse Galois problem
\cite{Sa,Sw}.

Saltman \cite{Sa} found examples of groups $G$ of order $p^9$ such
that $\mathbb C(V)^G$ is not stably rational over $\mathbb C$. His
main method was application of the unramified cohomology group
$H_{nr}^2(\mathbb C(V)^G,\mathbb Q/\mathbb Z)$ as an obstruction.
Bogomolov \cite{Bo} proved that $H_{nr}^2(\mathbb C(V)^G,\mathbb
Q/\mathbb Z)$ is canonically isomorphic to
\begin{equation*}
B_0(G)=\bigcap_{A}\ker\{\res_G^A:H^2(G,\mathbb Q/\mathbb Z)\to
H^2(A,\mathbb Q/\mathbb Z)\}
\end{equation*}
where $A$ runs over all the bicyclic subgroups of $G$ (a group $A$
is called bicyclic if $A$ is either a cyclic group or a direct
product of two cyclic groups). The group $B_0(G)$ is a subgroup of
the Schur multiplier $H^2(G,\mathbb Q/\mathbb Z)$, and
Kunyavski\u{i} \cite{Ku} called it the \emph{Bogomolov multiplier}
of $G$. Thus the vanishing of the Bogomolov multiplier is an
obstruction to Noether's problem.

Recently, Moravec \cite{Mo1} used a notion of the nonabelian
exterior square $G\wedge G$ of a given group $G$ to obtain a new
description of $B_0(G)$. Namely, he proved that $B_0(G)$ is
(non-canonically) isomorphic to the quotient group $M(G)/M_0(G)$,
where $M(G)$ is the kernel of the commutator homomorphism $G\wedge
G\to [G,G]$, and $M_0(G)$ is the subgroup of $M(G)$ generated by all
$x\wedge y$ such that $x,y\in G$ commute.

The Bogomolov multipliers for the groups of order $p^n$ for $n\leq
6$ were calculated recently in \cite{HoK,HKK,Mo2,CM}.  Kang and
Kunyavski\u{i} \cite{KK} raised the question whether the Bogomolov
multiplier of any unitriangular group is trivial. In our main
result, Theorem \ref{main}, we prove that if $G$ is either a
unitriangular group over $\f$, a quotient of its lower central
series, or a subgroup of its lower central series, then $B_0(G)=0$.
The key idea to prove our result is to show that the multiplication
rules in unitriangular groups allow us to group the same commutators
into powers modulo $M_0(G)$. We use a technique similar to
introducing a partial order of commutators. For example, if it is
given an ordered set $a_1,a_2,\dots,a_k$ such that $[a_i,a_j]=a_k$
for $k>i,j$, or $[a_i,a_j]=1$, we can always write arbitrary product
of $a_i$'s as $\prod_{i=1}^ka_i^{n(i)}$. After that we apply several
times Lemma \ref{l1} to show that any element from $M(G)$ is also in
$M_0(G)$.

Another open problem mentioned in \cite{KK} is whether the Bogomolov
multiplier of a central product of two groups is trivial, where both
groups have trivial  Bogomolov multipliers.

Let $G$ be a central product of two groups $G_1$ and $G_2$ with a
common central subgroup. Let $\theta:K_1\to K_2$ be an isomorphism,
where $K_1\leq Z(G_1)$ and $K_2\leq Z(G_2)$, and let $E=G_1\times
G_2$. Then the central product of $G_1$ and $G_2$ is defined as the
quotient group $G=E/N$, where $N=\{ab: a\in K_1,b\in K_2,
\theta(a)=b^{-1}\}\in Z(E)$. Recently, Michailov \cite{Mi}
established the following.

\begin{thm}\label{cp}
{\rm (Michailov}\cite[Theorem 3.1]{Mi}{\rm )} Let $\theta:G_1\to
G_2$ be a group homomorphism such that its restriction
$\theta\vert_{K_1}:K_1\to K_2$ is an isomorphism, where $K_1\leq
Z(G_1)$ and $K_2\leq Z(G_2)$. Let $G$ be a central product of $G_1$
and $G_2$, i.e., $G=E/N$, where $E=G_1\times G_2$ and $N=\{ab: a\in
K_1,b\in K_2, \theta(a)=b^{-1}\}$. If
$B_0(G_1/K_1)=B_0(G_1)=B_0(G_2)=0$ then $B_0(G)=0$.
\end{thm}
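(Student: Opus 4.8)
The plan is to work throughout with Moravec's description $B_0(G)\cong M(G)/M_0(G)$, and to exploit that $G=E/N$ is a \emph{central} quotient of $E=G_1\times G_2$, since $N\le K_1\times K_2\le Z(E)$. First I would record two standard inputs: (i) $B_0$ is additive on direct products (the cross terms $x\wedge y$ with $x\in G_1,\,y\in G_2$ commute, hence lie in $M_0$), so $B_0(E)=B_0(G_1)\oplus B_0(G_2)=0$ by hypothesis; and (ii) the five-term (Hopf) exact sequence for the central extension $1\to N\to E\xrightarrow{\pi}G\to 1$, which — since $M(\cdot)$ is the Schur multiplier and is central in the exterior square — yields a transgression homomorphism $\partial\colon M(G)\to N\cap[E,E]$ that is surjective with kernel $\pi_*(M(E))$.

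The reduction step is then formal. Functoriality of the exterior square gives $\pi_*(M_0(E))\subseteq M_0(G)$, and $B_0(E)=0$ means $M(E)=M_0(E)$; hence $\ker\partial=\pi_*(M(E))\subseteq M_0(G)$. Consequently $\partial$ descends to an isomorphism
\[
B_0(G)=M(G)/M_0(G)\;\cong\;(N\cap[E,E])\big/\partial(M_0(G)).
\]
So everything comes down to the inclusion $N\cap[E,E]\subseteq\partial(M_0(G))$; concretely, that every element of $N\cap[E,E]$ is a product of commutators $[a,b]$ with $a,b\in E$ and $[a,b]\in N$ (equivalently, with $\pi(a),\pi(b)$ commuting in $G$, so that $\pi(a)\wedge\pi(b)\in M_0(G)$).

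Here is where I would bring in the last hypothesis. A direct computation in $E=G_1\times G_2$ identifies $N\cap[E,E]=\{(c,\theta(c)^{-1}):c\in K_1\cap[G_1,G_1]\}$, using $\theta([G_1,G_1])\subseteq[G_2,G_2]$. Applying the same transgression analysis to the central extension $1\to K_1\to G_1\to G_1/K_1\to 1$, the assumption $B_0(G_1/K_1)=0$ says exactly that $K_1\cap[G_1,G_1]$ is generated by commutators $[u,v]$ (with $u,v\in G_1$) that already lie in $K_1$. For such a commutator $d=[u,v]\in K_1$ I would use the $\theta$-twisted lifts $a=(u,\theta(v))$ and $b=(v,\theta(u))$ in $E$: a one-line calculation gives $[a,b]=\big([u,v],[\theta(v),\theta(u)]\big)=(d,\theta(d)^{-1})\in N$. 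Writing a general $c\in K_1\cap[G_1,G_1]$ as a product of such $d$'s and multiplying the corresponding commutators (all lying in the central, hence abelian, subgroup $N$) realizes any $(c,\theta(c)^{-1})$ as $\partial$ of an element of $M_0(G)$; thus $M(G)=M_0(G)$ and $B_0(G)=0$.

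The main obstacle is precisely the inclusion $N\cap[E,E]\subseteq\partial(M_0(G))$. The naive attempt fails because an element of $K_1\cap[G_1,G_1]$ is in general only a product of commutators of $G_1$, none of which need lie in $K_1$ individually — and this failure is exactly what $B_0(G_1/K_1)$ measures. The two devices that overcome it are the hypothesis $B_0(G_1/K_1)=0$, which supplies generating commutators lying inside $K_1$, and the twisted choice of lifts $a=(u,\theta(v)),\,b=(v,\theta(u))$, which forces the resulting commutator into $N$; this is the only place where $\theta$ is used as a homomorphism on all of $G_1$ rather than merely on $K_1$.
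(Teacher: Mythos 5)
Your argument is correct, but note first that this paper does not actually prove Theorem~\ref{cp}: it is imported verbatim from \cite[Theorem 3.1]{Mi}, so there is no in-paper proof to match. The proof in \cite{Mi} (and the style of everything proved in this paper) is explicit commutator calculus in $[G,G^\varphi]\le\tau(G)$: one expands generators $[x_1y_1,(x_2y_2)^\varphi]$ with $x_i\in G_1$, $y_i\in G_2$ via Lemma~\ref{l1} and shows directly that a generating set of $M^*(G)$ lies in $M_0^*(G)$. Your route is genuinely different and more structural: you use the Ganea/five-term transgression $\partial\colon M(G)\twoheadrightarrow N\cap[E,E]$ with $\ker\partial=\pi_*(M(E))$, kill the kernel using $B_0(E)=B_0(G_1)\oplus B_0(G_2)=0$ and $\pi_*(M_0(E))\subseteq M_0(G)$, reduce to $N\cap[E,E]\subseteq\partial(M_0(G))$, identify $N\cap[E,E]=\{(c,\theta(c)^{-1}):c\in K_1\cap[G_1,G_1]\}$, and then use $B_0(G_1/K_1)=0$ (via the transgression for $1\to K_1\to G_1\to G_1/K_1\to 1$) to generate $K_1\cap[G_1,G_1]$ by single commutators $d=[u,v]\in K_1$, which the twisted lifts $a=(u,\theta(v))$, $b=(v,\theta(u))$ realize as $[a,b]=(d,\theta(d)^{-1})\in N$. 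I checked each step and see no gap: the induced isomorphism $B_0(G)\cong(N\cap[E,E])/\partial(M_0(G))$ is legitimate because $\ker\partial\subseteq M_0(G)$, and the final product argument works because $N$ is central (so $\partial(M_0(G))$ is a subgroup of the abelian group $N$). The only things you lean on without proof are two standard facts — additivity of $B_0$ on direct products and the exactness/surjectivity of the transgression for central extensions in the exterior-square formulation — both available in Moravec \cite{Mo1}; it would be worth citing them precisely. What your approach buys is transparency: it isolates exactly where each of the three hypotheses is used, whereas the commutator-calculus proof buys self-containedness within the $\tau(G)$ machinery already set up in Section~\ref{2}.
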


With the help of Theorem \ref{cp}, we prove in Corollary \ref{cor}
that the Bogomolov multiplier of a central product of two
unitriangular groups is also trivial.

\section{Preliminaries and notations}
\label{2}

Let $G$ be a group and $x,y\in G$. We define $x^y=y^{-1}xy$ and
write $[x,y]=x^{-1}x^y=x^{-1}y^{-1}xy$ for the commutator of $x$ and
$y$. We define the commutators of higher weight as
$[x_1,x_2,\dots,x_n]=[[x_1,\dots,x_{n-1}],x_n]$ for
$x_1,x_2,\dots,x_n\in G$.

The nonabelian exterior square of $G$ is a group generated by the
symbols $x\wedge y$ ($x,y\in G$), subject to the relations
{\allowdisplaybreaks\begin{eqnarray*} &xy\wedge z&=(x^y\wedge
z^y)(y\wedge z),\\
&x\wedge yz&=(x\wedge z)(x^z\wedge y^z),\\
&x\wedge x&=1,
\end{eqnarray*}}
for all $x,y,z\in G$. We denote this group by $G\wedge G$. Let
$[G,G]$ be the commutator subgroup of $G$. Obverse that the
commutator map $\kappa : G\wedge G\to [G,G]$, given by $x\wedge
y\mapsto [x,y]$ is a well-defined group homomorphism. Let $M(G)$
denote the kernel of $\kappa$, and $M_0(G)$ detone the subgroup of
$M(G)$ generated by all $x\wedge y$ such that $x,y\in G$ commute.
Moravec proved in \cite{Mo1} that $B_0(G)$ is (non-canonically)
isomorphic to the quotient group $M(G)/M_0(G)$.

There is also an alternative way to obtain the non-abelian exterior
square $G\wedge G$. Let $\varphi$ be an automorphism of $G$ and
$G^\varphi$ be an isomorphic copy of $G$ via $\varphi : x\mapsto
x^\varphi$. We define $\tau(G)$ to be the group generated by $G$ and
$G^\varphi$, subject to the following relations:
$[x,y^\varphi]^z=[x^z,(y^z)^\varphi]=[x,y^\varphi]^{z^\varphi}$ and
$[x,x^\varphi]=1$ for all $x, y, z\in G$. Obviously, the groups $G$
and $G^\varphi$ can be viewed as subgroups of $\tau(G)$. Let
$[G,G^\varphi]=\langle[x,y^\varphi] : x,y\in G\rangle$ be the
commutator subgroup. Notice that the map $\phi : G\wedge G\to
[G,G^\varphi]$ given by $x\wedge y\mapsto [x,y^\varphi]$ is actually
an isomorphism of groups (see \cite{BM}).

Now, let $\kappa^*=\kappa\cdot\phi^{-1}$ be the composite map from
$[G,G^\varphi]$ to $[G,G]$, $M^*(G)=\ker\kappa^*$ and
$M_0^*(G)=\phi(M_0(G))$. Then $B_0(G)$ is clearly isomorphic to
$M^*(G)/M_0^*(G)$ by \cite{Mo1}. Notice that
\begin{equation*}
M^*(G)=\left\{\prod_{\text{finite}}[x_i,y_i^\varphi]^{\varepsilon_i}\in
[G,G^\varphi] : \varepsilon_i=\pm 1,
\prod_{\text{finite}}[x_i,y_i]^{\varepsilon_i}=1\right\},
\end{equation*}
and
\begin{equation*}
M_0^*(G)=\left\{\prod_{\text{finite}}[x_i,y_i^\varphi]^{\varepsilon_i}\in
[G,G^\varphi] : \varepsilon_i=\pm 1, [x_i,y_i]=1\right\}.
\end{equation*}
In order to prove that $B_0(G)=0$ for a given group $G$, it suffices
to show that $M^*(G)=M_0^*(G)$. This can be achieved by finding a
generating set of $M^*(G)$ consisting solely of elements of
$M_0^*(G)$.

The advantage of the above description of $G\wedge G$ is the ability
of using the full power of the commutator calculus instead of
computing with elements of $G\wedge G$. The following Lemma collects
various properties of $\tau(G)$ and $[G,G^\varphi]$ that will be
used in the proofs of our main results.

\begin{lem}\label{l1}
{\rm (}\cite{BM}{\rm)} Let $G$ be a group.
\begin{enumerate}
    \item $[x,yz]=[x,z][x,y][x,y,z]$ and $[xy,z]=[x,z][x,z,y][y,z]$ for all $x,y,z\in G$.
    \item If $G$ is nilpotent of class $c$, then $\tau(G)$ is nilpotent of class at most
    $c+1$.
    \item If $G$ is nilpotent of class $\leq 2$, then $[G,G^\varphi]$ is abelian.
    \item $[x,y^\varphi]=[x^\varphi,y]$ for all $x,y\in G$.
    \item $[x,y,z^\varphi]=[x,y^\varphi,z]=[x^\varphi,y,z]=[x^\varphi,y^\varphi,z]=[x^\varphi,y,z^\varphi]=[x,y^\varphi,z^\varphi]$ for all $x,y,z\in G$.
    \item $[[x,y^\varphi],[a,b^\varphi]]=[[x,y],[a,b]^\varphi]$ for all $x,y,a,b\in G$.
    \item $[x^n,y^\varphi]=[x,y^\varphi]^n=[x,(y^\varphi)^n]$ for all integers $n$ and $x,y\in G$ with $[x,y]=1$.
    \item If $[G,G]$ is nilpotent of class $c$, then $[G,G^\varphi]$ is nilpotent of class $c$ or $c+1$.
\end{enumerate}
\end{lem}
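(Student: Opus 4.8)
The plan is to sort the eight assertions by the tools they need: item~1 is a pair of universal commutator identities, items~4--7 are ``$\varphi$-transfer'' identities that repackage the three defining relations of $\tau(G)$, and items~2,~3,~8 are nilpotency statements. Throughout I would work inside $\tau(G)$ using only the relations $[x,y^\varphi]^z=[x^z,(y^z)^\varphi]$, $[x,y^\varphi]^z=[x,y^\varphi]^{z^\varphi}$, and $[x,x^\varphi]=1$, together with the fact that $\varphi$ is an automorphism, so $(xy)^\varphi=x^\varphi y^\varphi$.

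Item~1 holds in any group: from $[a,bc]=[a,c]\,[a,b]^c$ and $[a,b]^c=[a,b]\,[a,b,c]$ one reads off $[x,yz]=[x,z][x,y][x,y,z]$, and symmetrically $[xy,z]=[x,z][x,z,y][y,z]$. These identities drive everything else. For item~4 I would start from $[xy,(xy)^\varphi]=1$, i.e.\ $[xy,x^\varphi y^\varphi]=1$. Expanding with $[a,bc]=[a,c]\,[a,b]^c$ and $[ab,c]=[a,c]^b[b,c]$ and discarding $[x,x^\varphi]=[y,y^\varphi]=1$ gives $[xy,x^\varphi y^\varphi]=[x,y^\varphi]^y\,([y,x^\varphi])^{y^\varphi}$; the second defining relation converts the outer $y^\varphi$-conjugation into $y$-conjugation, collapsing this to $\bigl([x,y^\varphi][y,x^\varphi]\bigr)^y=1$, whence $[x,y^\varphi]=[y,x^\varphi]^{-1}=[x^\varphi,y]$.

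The weight-three identities of item~5 follow by the same relocate-the-$\varphi$ principle: item~4 and the two conjugation relations together say that $g$ and $g^\varphi$ act identically on $[G,G^\varphi]$, which lets one toggle the $\varphi$ on any single entry of a mixed weight-three commutator, and each of the five equalities is then a short calculation. Item~7 follows from item~1 by induction on $n$, the hypothesis $[x,y]=1$ forcing every weight-three correction term produced by item~1 to collapse, via item~5, to a bracket containing $[x,y]$ and hence to vanish, so that $[x^n,y^\varphi]=[x,y^\varphi]^n=[x,(y^\varphi)^n]$. Item~6 is then bookkeeping: applying items~4 and~5 to relocate the $\varphi$'s identifies both $[[x,y^\varphi],[a,b^\varphi]]$ and $[[x,y],[a,b]^\varphi]$ with the same element of $\gamma_2([G,G^\varphi])$.

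The nilpotency bounds are where I expect the real work. For item~2 I would examine the lower central series of $\tau(G)=\langle G\cup G^\varphi\rangle$: every weight-$(c+1)$ iterated commutator in the generators is either a pure $G$- or $G^\varphi$-commutator lying in $\gamma_{c+1}(G)=1$ (resp.\ its $\varphi$-image), or, after normalizing the $\varphi$'s by items~4--5, a mixed commutator that maps under $\kappa^*$ into $\gamma_{c+1}(G)=1$ and hence lies in the kernel $M^*(G)$, the central piece of the exterior square; commuting once more with any generator then annihilates it, giving $\gamma_{c+2}(\tau(G))=1$. Item~3 is immediate from item~2, since class $\le 2$ forces $\tau(G)$ of class $\le 3$ and hence $[[G,G^\varphi],[G,G^\varphi]]\subseteq\gamma_4(\tau(G))=1$. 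Item~8 follows from the isomorphism $\phi:G\wedge G\to[G,G^\varphi]$ together with the central extension $1\to M(G)\to G\wedge G\to[G,G]\to1$: a central kernel raises the class of $[G,G]$ (namely $c$) by at most one, so $[G,G^\varphi]$ has class $c$ or $c+1$. The main obstacle is the commutator bookkeeping in item~2 --- confirming that the last nontrivial layer of the lower central series of $\tau(G)$ is exactly the central kernel $M^*(G)$ --- while the remaining parts are all identity-chasing fed by item~1.
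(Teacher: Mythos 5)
The paper does not prove this lemma at all: it is quoted from \cite{BM} and used as a black box, so there is no internal argument to compare yours against. Your outline reproduces the standard proofs from that literature and is structurally sound --- the expansion of $[xy,(xy)^\varphi]=1$ for item~4, the reduction of items~6 and~7 to items~1, 4 and~5, the lower-central-series analysis of $\tau(G)$ for items~2 and~3, and the central extension $1\to M(G)\to G\wedge G\to [G,G]\to 1$ for item~8 are exactly the intended arguments. Two places are thinner than you acknowledge. First, in item~5 the principle that conjugation by $g$ and by $g^\varphi$ agree on $[G,G^\varphi]$, combined with item~4, only yields the equalities within two clusters: $[x,y^\varphi,z]=[x^\varphi,y,z]=[x,y^\varphi,z^\varphi]=[x^\varphi,y,z^\varphi]$ on one side, and $[x,y,z^\varphi]=[x^\varphi,y^\varphi,z]$ on the other (the latter via $[x^\varphi,y^\varphi]=[x,y]^\varphi$ and item~4). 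The bridge $[x,y^\varphi,z]=[x,y,z^\varphi]$ is the one genuinely nontrivial identity and is not a ``toggle''; it requires an explicit computation, e.g.\ expanding $[x,y^\varphi]^z=[x^z,(y^z)^\varphi]$ after writing $x^z=x[x,z]$ and $y^z=y[y,z]$ and collecting terms with item~1. Second, the centrality of $M^*(G)$ in $\tau(G)$ (the key step in your item~2) and of $M(G)$ in $G\wedge G$ (item~8) is asserted by name rather than derived; it does follow from items~5 and~6, but since it carries the entire weight of both nilpotency bounds it needs a proof, not a label. Neither point is a wrong turn --- both are settled in \cite{BM} --- but they are the two load-bearing steps your sketch leaves open.
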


\section{Unitriangular groups}
\label{3}

Let $\ut$ denote the unitriangular group consisting of all $n\times
n$ upper-unitriangular matrices having entries in a finite field
$\f$, where $n\geq 2$. It is well known that this group is generated
by elementary transvections $t_{ij}(\lambda)$, where $1\leq i<j\leq
n$ and $\lambda\in\f^*$, which satisfy the following relations (see
\cite[\S 3]{KM}):
\begin{equation}\label{e3.1}
[t_{ik}(\alpha),t_{mj}(\beta)]=\left\{
\begin{array}{ll}
t_{ij}(\alpha\beta), &\ \text{if}\ k=m,\\
t_{mk}(-\alpha\beta), &\ \text{if}\ i=j,\\
\mathbf 1_n, &\ {\rm if}\ k\ne m\ \text{and}\ i\ne j,
\end{array}\right.
\end{equation}
where $\mathbf 1_n$ is the unity matrix of degree $n$, $1\leq
i,m\leq n-1$ and $2\leq k,j\leq n$. We assume also that
$t_{ij}(0)=\mathbf 1_n$.

Notice that the group $\ut$ is generated by $t_{i,i+1}(\f)$ for
$1\leq i\leq n-1$, where $t_{i,i+1}(\f)$ is the subgroup of $\ut$
generated by $\{t_{i,i+1}(\alpha)\mid\alpha\in\f\}$. Thus
\begin{equation*}
\ut=\langle t_{1,2}(\f),t_{2,3}(\f),\dots,t_{n-1,n}(\f)\rangle.
\end{equation*}

Let $\text{UT}_n^\ell(\f)$ be the subgroup of all matrices from
$\ut$ having $\ell-1$ zero diagonals above the main diagonal. We
have the chain of subgroups
\begin{equation*}
\ut=\text{UT}_n^1(\f)\geq \text{UT}_n^2(\f)\geq\cdots\geq
\text{UT}_n^{n-2}(\f),
\end{equation*}
where $\text{UT}_n^\ell(\f)=\langle t_{ij}(\f)\mid j-i\geq
\ell\rangle$.

Applying \eqref{e3.1}, we obtain the formula
\begin{equation}\label{e3.2}
[\text{UT}_n^r(\f),\text{UT}_n^s(\f)]=\text{UT}_n^{r+s}(\f).
\end{equation}

Now, recall that the subgroups in the lower central series of $\ut$
are defined inductively by
\begin{equation*}
\gamma_1(\ut)=\ut\quad\text{and}\quad\gamma_{\ell+1}(\ut)=[\gamma_\ell(\ut),\ut].
\end{equation*}
From \eqref{e3.2} it follows that
$\gamma_\ell(\ut)=\text{UT}_n^\ell(\f)$ for $1\leq \ell\leq n-2$,
and in particular $\gamma_{n-1}(\ut)=\{1\}$.

Next, let $\Gamma_{n,\ell}=\ut/\gamma_\ell(\ut)$ where $1\leq
\ell\leq n-2$. Obviously, $\Gamma_{n,1}=\{1\}$ and
$\Gamma_{n,2}\simeq \f \underbrace{\oplus\dots\oplus}_{n-1}\f$.

We are now going to prove the main result of this paper.

\begin{thm}\label{main}
Let the group $G$ be isomorphic to any of the groups
$\ut,\text{UT}_n^\ell(\f)$ or $\Gamma_{n,\ell}$, where $n\geq 2$ and
$1\leq \ell\leq n-2$. Then $B_0(G)=0$.
\end{thm}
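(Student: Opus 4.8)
The plan is to show that $M^*(G) = M_0^*(G)$ for each of the three families of groups, using the commutator-calculus description of $B_0(G)$ developed in Section 2. Since all three families are built from the same generating transvections $t_{ij}(\f)$ subject to the relations \eqref{e3.1}, I would first establish the result for $G = \ut$ and then argue that the cases $\text{UT}_n^\ell(\f)$ and $\Gamma_{n,\ell}$ follow by essentially the same computation, since a subgroup of the lower central series is itself a unitriangular-type group generated by a subset of the transvections, and a quotient $\Gamma_{n,\ell}$ simply kills all transvections $t_{ij}$ with $j - i \geq \ell$. Thus the core obstacle is the generic case $\ut$.

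For $\ut$, the strategy is to exhibit a generating set of $M^*(G)$ consisting entirely of elements lying in $M_0^*(G)$. The natural generators of $[G,G^\varphi]$ are the elements $[t_{ik}(\alpha), t_{mj}(\beta)^\varphi]$, and by Lemma \ref{l1}(7) together with the bilinearity coming from \eqref{e3.1}, I expect these to reduce modulo $M_0^*(G)$ to expressions $[t_{ik}(1), t_{k\ell}(1)^\varphi]$ indexed by a chain-structure on the index pairs. The key mechanism, as the authors signal in the introduction, is to impose a partial order on the transvections so that $[t_{ik}, t_{k\ell}] = t_{i\ell}$ behaves like the ordered set $a_1, \dots, a_k$ with $[a_i, a_j] = a_k$ or $1$; this lets an arbitrary product of generators be rewritten as $\prod t_{ij}^{n(ij)}$. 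The idea is then that whenever two generators commute in $G$ — which, by \eqref{e3.1}, happens precisely when $k \neq m$ and $i \neq j$ — the corresponding wedge lies in $M_0^*(G)$ by definition, and Lemma \ref{l1}(7) lets me collect repeated noncommuting factors into powers, which land in $M_0^*(G)$ after accounting for the constraint $\prod [x_i, y_i]^{\varepsilon_i} = 1$ defining $M^*(G)$.

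Concretely, I would take an element $w = \prod [x_i, y_i^\varphi]^{\varepsilon_i} \in M^*(G)$, expand each $x_i, y_i$ in terms of the transvection generators using Lemma \ref{l1}(1) to split wedges over products, and use parts (4)--(6) of Lemma \ref{l1} to move the $\varphi$ around freely and to commute the resulting basic commutators past one another. Because the relation $\prod [x_i, y_i]^{\varepsilon_i} = 1$ holds downstairs in $[G,G]$, after collecting like terms $[t_{ik}(1), t_{k\ell}(1)^\varphi]$ into integer powers, the exponents must match the trivial relation in $[G,G]$; each genuinely surviving basic commutator pairs a transvection with one it commutes with, placing it in $M_0^*(G)$. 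The main obstacle will be the bookkeeping of the collection process: ensuring that the higher-weight correction terms $[x,y,z]$ produced by Lemma \ref{l1}(1) either vanish (they lie in deeper terms $\text{UT}_n^{r+s+t}(\f)$ of the central series and can be handled inductively) or are themselves absorbed into $M_0^*(G)$, and carefully verifying that the commuting pairs isolated at the end really exhaust the support of $w$ modulo $M_0^*(G)$. I would organize this as an induction on the nilpotency class, i.e.\ on $n$, reducing the weight of the commutators at each stage, which simultaneously covers the $\text{UT}_n^\ell(\f)$ and $\Gamma_{n,\ell}$ cases since they are nilpotent of strictly smaller class.
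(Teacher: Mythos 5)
Your overall architecture (reduce to basic commutators $[t_{ij},t_{jk}^\varphi]$, collect exponents, compare with the relation $\prod[x_i,y_i]^{\varepsilon_i}=1$ in $[G,G]$, handle $\text{UT}_n^\ell(\f)$ and $\Gamma_{n,\ell}$ by the same computation) matches the paper's. But there is a genuine gap at the decisive step. After collection, membership of $w$ in $M^*(G)$ does \emph{not} force each surviving basic commutator to have zero exponent, nor are the surviving commutators wedges of commuting elements. The problem is that for fixed $i,k$ the distinct non-commuting pairs $(t_{i,j},t_{j,i+k})$, $j=i+1,\dots,i+k-1$, all have the \emph{same} commutator $t_{i,i+k}$ in $G$, so the condition downstairs only constrains the sum $\sum_j n(i,j,k)\equiv 0 \pmod p$. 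A concrete witness: $w=[t_{12},t_{24}^\varphi][t_{13},t_{34}^\varphi]^{-1}$ lies in $M^*(G)$ since both commutators equal $t_{14}$, yet neither factor is a wedge of commuting elements (so neither is in $M_0^*(G)$ by your criterion) and neither exponent vanishes. Your sentence ``each genuinely surviving basic commutator pairs a transvection with one it commutes with'' is exactly where the argument breaks: the surviving commutators are the ones with $k=m$ in \eqref{e3.1}, i.e.\ the non-commuting pairs.

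The missing idea, which is the real content of the paper's Step 3, is to prove that for fixed $i,k$ all the basic commutators $[t_{i,j},t_{j,i+k}^\varphi]$ are congruent to one another modulo $M_0^*(G)$; only then does the vanishing of the exponent sum finish the proof. The paper does this by exhibiting a commuting pair of \emph{products} of transvections, $[t_{ij}t_{rs},\,t_{js}t_{ir}]=1$ with $r=i+k-1$, $s=i+k$, so that $[t_{ij}t_{rs},(t_{js}t_{ir})^\varphi]\in M_0^*(G)$ by definition; expanding this wedge with Lemma \ref{l1}(1) yields $[t_{ij},t_{js}^\varphi][t_{ir},t_{rs}^\varphi]^{-1}\in M_0^*(G)$ up to factors that visibly lie in $M_0^*(G)$. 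Without this (or some equivalent device producing relations between distinct non-commuting wedges with equal image), the collection process alone cannot show $M^*(G)=M_0^*(G)$. A secondary, smaller issue: your appeal to Lemma \ref{l1}(7) to collect powers requires $[x,y]=1$, which fails for the pairs $(t_{ij},t_{jk})$; the paper instead proves $[t_{jk}^\varphi,t_{ij}^a]\equiv[t_{jk}^\varphi,t_{ij}]^a\pmod{M_0^*(G)}$ by induction, absorbing the correction term $[t_{ik}(-a+1),t_{ij}^\varphi]$ into $M_0^*(G)$ because $t_{ik}$ and $t_{ij}$ do commute.
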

\begin{proof}
{\bf Case I.} Let $G=\ut$ for arbitrary $n\geq 2$. It is easy to
verify that $t_{ij}(\alpha)t_{ij}(\beta)=t_{ij}(\alpha+\beta)$ for
any $\alpha,\beta\in\f$ and $1\leq i<j\leq n$. Therefore
$t_{ij}(\f)=\langle t_{ij}(1)\rangle$ is isomorphic to the additive
group of $\f$. We will write henceforth $t_{ij}$ instead of
$t_{ij}(1)$. Then the group $[G,G^\varphi]$ is generated modulo
$M_0^*(G)$ by the commutators $[t_{ij},t_{jk}^\varphi]$ for $1\leq
i<j<k\leq n$. We divide the proof into three steps.

{\it Step 1.} We are going to prove by induction that
$[t_{jk}^\varphi,t_{ij}^a]\equiv
[t_{jk}^\varphi,t_{ij}]^a\pmod{M_0^*(G)}$ for any $a\in\f$. In
particular it follows that $[t_{jk}^\varphi,t_{ij}]^p\in M_0^*(G)$

Assume that our claim is true for $a-1$. From Lemma \ref{l1} it
follows that {\allowdisplaybreaks\begin{eqnarray*}
&[t_{jk}^\varphi,t_{ij}^a]&= [t_{jk}^\varphi,t_{ij}][t_{jk}^\varphi,t_{ij}^{a-1}][t_{jk}^\varphi,t_{ij}^{a-1},t_{ij}]\\
&&\equiv [t_{jk}^\varphi,t_{ij}]^a[[t_{jk},t_{ij}^{a-1}],t_{ij}^\varphi]=[t_{jk}^\varphi,t_{ij}]^a[t_{ik}(-a+1),t_{ij}^\varphi],\\
&&\equiv [t_{jk}^\varphi,t_{ij}]^a\pmod{M_0^*(G)},
\end{eqnarray*}}
where $[t_{ik}(-a+1),t_{ij}^\varphi]\in M_0^*(G)$.

{\it Step 2.} We are going to show that every element $w\in
[G,G^\varphi]$ can be written as
\begin{equation}\label{e3.3}
w=\prod_{k=2}^{n-1}\prod_{i=1}^{n-k}\prod_{j=i+1}^{i+k-1}[t_{i,j},t_{j,i+k}^\varphi]^{n(i,j,k)}\cdot\tilde
w,
\end{equation}
where $n(i,j,k)\in\f$ and $\tilde w\in M_0^*(G)$.

According to Lemma \ref{l1}, we have the relation
\begin{equation}\label{e3.4}
[[t_{ij},t_{jv}^\varphi],[t_{rs},t_{su}^\varphi]]=[[t_{ij},t_{jv}],[t_{rs},t_{su}]^\varphi]=[t_{iv},t_{ru}^\varphi],
\end{equation}
where $1\leq i<j<v\leq n$ and $1\leq r<s<u\leq n$. If $v\ne r$ and
$i\ne u$ then $[t_{iv},t_{ru}^\varphi]\in M_0^*(G)$. If $v=r$ or
$i=u$ then $[t_{iv},t_{ru}^\varphi]$ is a generator modulo
$M_0^*(G)$.


Applying formula \eqref{e3.4} for $i=1,j=2,k=3$, we can write
$w=[t_{12},t_{23}^\varphi]^{n(1,2,2)}w_1$, where
$[t_{12},t_{23}^\varphi]$ does not appear as a multiplier in the
decomposition of $w_1\in [G,G^\varphi]$. Since
$[t_{12},t_{23}^\varphi]^p\in M_0^*(G)$ (see Step 1), we may assume
that $n(1,2,2)\in\f$. Next, we can write
$w=[t_{12},t_{23}^\varphi]^{n(1,2,2)}[t_{23},t_{34}^\varphi]^{n(2,3,2)}w_2$,
where $[t_{12},t_{23}^\varphi]$ and $[t_{23},t_{34}^\varphi]$ do not
appear as multipliers in the decomposition of $w_2\in
[G,G^\varphi]$. In this way we get the decomposition
\begin{equation*}
w=\prod_{i=1}^{n-2}[t_{i,i+1},t_{i+1,i+2}^\varphi]^{n(i,i+1,2)}\cdot
w_{n-2},
\end{equation*}
where $n(i,i+1,2)\in\f$ and $w_{n-2}\in [G,G^\varphi]$ does not
contain any of the commutators in the product.

Similarly, we can write
\begin{equation*}
w_{n-2}=\prod_{i=1}^{n-3}\prod_{j=i+1}^{i+2}[t_{i,j},t_{j,i+3}^\varphi]^{n(i,j,3)}\cdot
w_{2(n-3)},
\end{equation*}
where $w_{2(n-3)}\in [G,G^\varphi]$ does not contain any of the
commutators in the product.

We can proceed by induction on $k$ to obtain the final decomposition
\eqref{e3.3}. We have shown how to construct the products for $k=2$
and $k=3$. Suppose that we have the desired decomposition for $k-1$.
When we apply the commutation rule \eqref{e3.4} for any commutator
$[t_{i,j},t_{j,i+k}^\varphi]$, there might appear a commutator
$[t_{i,i+k},t_{i+k,u}^\varphi]$ which, however, does not appear in
$$\prod_{m=2}^{k}\prod_{i=1}^{n-m}\prod_{j=i+1}^{i+m-1}[t_{i,j},t_{j,i+m}^\varphi]^{n(i,j,m)}$$
(clearly, $k\geq m>j-i$). This fact allows us to group together the
same commutators into powers modulo $M_0^*(G)$.

{\it Step 3.} From \eqref{e3.1} it follows that
\begin{equation*}
w^{\kappa^*}=\prod_{k=2}^{n-1}\prod_{i=1}^{n-k}t_{i,i+k}^{m(i,k)},
\end{equation*}
where $m(i,k)=\sum_{j=i+1}^{i+k-1}n(i,j,k)$. Recall that
$t_{i,i+k}$'s are independent generators of $G$, i.e., any product
of their powers is equal to $1$ (i.e., the unitary matrix $\mathbf
1_n$) if and only if each power is equal to $1$. Therefore, $w\in
M^*(G)$ if and only if $m(i,k)=0\in\f$ for all $i,k$.

Note that $m(i,2)=n(i,i+1,2)$. From Step 1 it follows that
$[t_{i,i+1},t_{i+1,i+2}^\varphi]^{n(i,i+1,2)}\in M_0^*(G)$ for
$1\leq i\leq n-2$.

Now, choose arbitrary $i$ and $k\geq 3$. Then
$n(i,i+k-1,k)=-\sum_{j=i+1}^{i+k-2}n(i,j,k)\in\f$. Since the
commutators
$[t_{i,i+1},t_{i+1,i+3}^\varphi],\dots,[t_{i,i+k-1},t_{i+k-1,i+k}^\varphi]$
commute pairwise modulo $M_0^*(G)$, and
$[t_{i,i+k-1},t_{i+k-1,i+k}^\varphi]^{p}\in M_0^*(G)$ (see Step 1) ,
we have that
\begin{equation*}
\prod_{j=i+1}^{i+k-1}[t_{i,j},t_{j,i+k}^\varphi]^{n(i,j,k)}\equiv\prod_{j=i+1}^{i+k-2}([t_{i,j},t_{j,i+k}^\varphi][t_{i,i+k-1},t_{i+k-1,i+k}^\varphi]^{-1})^{n(i,j,k)}\pmod{M_0^*(G)}.
\end{equation*}

We are going to show that
$[t_{i,j},t_{j,i+k}^\varphi][t_{i,i+k-1},t_{i+k-1,i+k}^\varphi]^{-1}$
belongs to $M_0^*(G)$ for any $j$. From this it will easily follow
that $\prod_{j=i+1}^{i+k-1}[t_{i,j},t_{j,i+k}^\varphi]^{n(i,j,k)}$
belongs to $M_0^*(G)$, and from \eqref{e3.3} it will follow that $w$
belongs to $M_0^*(G)$.

For abuse of notation put $r=i+k-1,s=i+k$. Note that
{\allowdisplaybreaks\begin{eqnarray*}
&&[t_{ij}t_{rs},t_{js}t_{ir}]=[t_{ij},t_{ir}]^{t_{rs}}[t_{ij},t_{js}]^{t_{ir}t_{rs}}[t_{rs},t_{ir}][t_{rs},t_{js}]^{t_{ir}}=t_{is}^{t_{ir}t_{rs}}t_{is}^{-1}=1,
\end{eqnarray*}}
hence $[t_{ij}t_{rs},(t_{js}t_{ir})^\varphi]\in M_0^*(G)$. Expanding
the latter using the class restriction and Lemma \ref{l1}, we get
{\allowdisplaybreaks\begin{eqnarray*}
&[t_{ij}t_{rs},(t_{js}t_{ir})^\varphi]&=[t_{ij},t_{ir}^\varphi]^{t_{rs}}[t_{ij},t_{js}^\varphi]^{t_{ir}^\varphi t_{rs}}[t_{rs},t_{ir}^\varphi][t_{rs},t_{js}^\varphi]^{t_{ir}^\varphi}\\
&&=[t_{ij},t_{ir}^\varphi]^{t_{rs}}[t_{ij},t_{js}^\varphi][t_{ij},t_{js}^\varphi,t_{ir}^\varphi
t_{rs}][t_{rs},t_{ir}^\varphi][t_{rs},t_{js}^\varphi]^{t_{ir}^\varphi}.
\end{eqnarray*}}
Observe that $[t_{ij},t_{ir}^\varphi]^{t_{rs}},
[t_{ij},t_{js}^\varphi,t_{ir}^\varphi t_{rs}]$ and
$[t_{rs},t_{js}^\varphi]^{t_{ir}^\varphi}$ all belong to $M_0^*(G)$.
Thus we conclude that
$[t_{ij},t_{js}^\varphi][t_{ir},t_{rs}^\varphi]^{-1}=[t_{ij},t_{js}^\varphi][t_{rs},t_{ir}^\varphi]\in
M_0^*(G)$, as required.

{\bf Case II.} Let $G=\text{UT}_n^\ell(\f)$ for arbitrary $\ell\geq
2$ and $n\geq 2$. We will again write $t_{ij}$ instead of
$t_{ij}(1)$. Then the group $[G,G^\varphi]$ is generated modulo
$M_0^*(G)$ by the commutators $[t_{ij},t_{jk}^\varphi]$ for
$j-i\geq\ell$ and $k-j\geq\ell$. Note that if $2\ell\geq n$, then
according to \eqref{e3.1} the group $G$ is abelian, so $B_0(G)=0$.

Assume that $2\ell\leq n-1$. Following Step 2 of Case I, we can show
that every element $w\in [G,G^\varphi]$ can be written as
\begin{equation*}
w=\prod_{k=\ell+1}^{n-\ell}\prod_{i=1}^{n-k-\ell+1}\prod_{j=i+\ell}^{i+k-1}[t_{i,j},t_{j,i+k+\ell-1}^\varphi]^{n(i,j,k)}\cdot\tilde
w,
\end{equation*}
where $n(i,j,k)\in\f$ and $\tilde w\in M_0^*(G)$. We have that
\begin{equation*}
w^{\kappa^*}=\prod_{k=\ell+1}^{n-\ell}\prod_{i=1}^{n-k-\ell+1}t_{i,i+k+\ell-1}^{m(i,k)},
\end{equation*}
where $m(i,k)=\sum_{j=i+\ell}^{i+k-1}n(i,j,k)$. Since
$t_{i,i+k+\ell-1}$'s are independent generators of $G$, $w\in
M^*(G)$ if and only if $m(i,k)=0\in\f$ for all $i,k$.

Note that $m(i,\ell+1)=n(i,i+\ell,\ell+1)$. From Case I, Step 1 it
follows that
$[t_{i,i+1},t_{i+1,i+2\ell}^\varphi]^{n(i,i+\ell,\ell+1)}\in
M_0^*(G)$ for $1\leq i\leq n-2\ell$.

Now, choose arbitrary $i$ and $k\geq \ell+2$. Then
$n(i,i+k-1,k)=-\sum_{j=i+\ell}^{i+k-2}n(i,j,k)\in\f$. Since the
commutators
$[t_{i,i+\ell},t_{i+\ell,i+2\ell+1}^\varphi],\dots,[t_{i,i+k-1},t_{i+k-1,i+k+\ell-1}^\varphi]$
commute pairwise modulo $M_0^*(G)$, and
$[t_{i,i+k-1},t_{i+k-1,i+k+\ell-1}^\varphi]^{p}\in M_0^*(G)$, we
have that
\begin{align*}
\prod_{j=i+\ell}^{i+k-1}[t_{i,j},t_{j,i+k+\ell-1}^\varphi]^{n(i,j,k)}&\equiv&\prod_{j=i+\ell}^{i+k-2}([t_{i,j},t_{j,i+k+\ell-1}^\varphi][t_{i,i+k-1},t_{i+k-1,i+k+\ell-1}^\varphi]^{-1})^{n(i,j,k)}\\
&&\pmod{M_0^*(G)}.
\end{align*}

From Case I, Step 3 it follows that
$[t_{i,j},t_{j,i+k+\ell-1}^\varphi][t_{i,i+k-1},t_{i+k-1,i+k+\ell-1}^\varphi]^{-1}$
belongs to $M_0^*(G)$ for any $j$. Therefore, $w$ belongs to
$M_0^*(G)$.

{\bf Case III.} Let $G=\Gamma_{n,\ell}$ for some $\ell\geq 1$. We
will write $t_{ij}$ instead of $t_{ij}(1)\text{UT}_n^\ell(\f)$. Note
that $[t_{ij},t_{jk}]=t_{ik}=1\in G$ for $k\geq i+\ell$. Therefore
the group $[G,G^\varphi]$ is generated modulo $M_0^*(G)$ by the
commutators $[t_{ij},t_{jk}^\varphi]$ for $1\leq i<j<k\leq
i+\ell-1$. As we have already observed, $\Gamma_{n,1}=\{1\}$ and
$\Gamma_{n,2}$ is abelian.

Assume that $\ell\geq 3$. Following Step 2 of Case I, we can show
that every element $w\in [G,G^\varphi]$ can be written as
\begin{equation*}
w=\prod_{k=2}^{\ell-1}\prod_{i=1}^{n-k}\prod_{j=i+1}^{i+k-1}[t_{i,j},t_{j,i+k}^\varphi]^{n(i,j,k)}\cdot\tilde
w,
\end{equation*}
where $n(i,j,k)\in\f$ and $\tilde w\in M_0^*(G)$. We have that
\begin{equation*}
w^{\kappa^*}=\prod_{k=2}^{\ell-1}\prod_{i=1}^{n-k}t_{i,i+k}^{m(i,k)},
\end{equation*}
where $m(i,k)=\sum_{j=i+1}^{i+k-1}n(i,j,k)$. Since $t_{i,i+k}$'s are
independent generators of $G$, $w\in M^*(G)$ if and only if
$m(i,k)=0\in\f$ for all $i,k$. Note that $m(i,2)=n(i,i+1,2)$. From
Step 1 of Case I,  it follows that
$[t_{i,i+1},t_{i+1,i+2}^\varphi]^{n(i,i+1,2)}\in M_0^*(G)$ for
$1\leq i\leq n-2$.

Now, choose arbitrary $i$ and $k\geq 3$. Then
$n(i,i+k-1,k)=-\sum_{j=i+1}^{i+k-2}n(i,j,k)\in\f$. Since the
commutators
$[t_{i,i+1},t_{i+1,i+3}^\varphi],\dots,[t_{i,i+k-1},t_{i+k-1,i+k}^\varphi]$
commute pairwise modulo $M_0^*(G)$, and
$[t_{i,i+k-1},t_{i+k-1,i+k}^\varphi]^{p}\in M_0^*(G)$, we have that
\begin{equation*}
\prod_{j=i+1}^{i+k-1}[t_{i,j},t_{j,i+k}^\varphi]^{n(i,j,k)}\equiv\prod_{j=i+1}^{i+k-2}([t_{i,j},t_{j,i+k}^\varphi][t_{i,i+k-1},t_{i+k-1,i+k}^\varphi]^{-1})^{n(i,j,k)}\pmod{M_0^*(G)}.
\end{equation*}

From Case I, Step 3 it follows that
$[t_{i,j},t_{j,i+k}^\varphi][t_{i,i+k-1},t_{i+k-1,i+k}^\varphi]^{-1}$
belongs to $M_0^*(G)$ for any $j$. Therefore, $w$ belongs to
$M_0^*(G)$.
\end{proof}

Another open problem mentioned in \cite{KK} is whether the Bogomolov
multiplier of a central product of two groups is trivial, where both
groups have trivial  Bogomolov multipliers. Let $G_1$ and $G_2$ be
two groups with a common central subgroup (up to an isomorphism).
Namely, let $\theta:K_1\to K_2$ be an isomorphism, where $K_1\leq
Z(G_1)$ and $K_2\leq Z(G_2)$, and let $E=G_1\times G_2$. Then the
central product $G_1*G_2$ of $G_1$ and $G_2$ is defined as the
quotient group $G=E/N$, where $N=\{ab: a\in K_1,b\in K_2,
\theta(a)=b^{-1}\}\in Z(E)$. We show the triviality of the Bogomolov
multiplier for central products of unitriangular groups in the
following.

\begin{cor}\label{cor}
Let the group $G$ be a central product of two unitriangular groups
$G_1\simeq \text{UT}_k(\f)$ and $G_2\simeq \ut$, where $m\geq
1,n\geq 2,k\geq 2$. Then $B_0(G)=0$.
\end{cor}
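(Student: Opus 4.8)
The plan is to deduce the statement from Theorem~\ref{cp}, using Theorem~\ref{main} to supply all three of its vanishing hypotheses. Since a central product is symmetric in its two factors, $G\simeq G_1*G_2\simeq G_2*G_1$, I am free to label the factors so that $G_1$ is the one of smaller size; thus I would write $G_1\simeq\text{UT}_k(\f)$ and $G_2\simeq\ut$ with $k\le n$. This choice is forced rather than merely convenient: any homomorphism $\theta:G_1\to G_2$ carries $\gamma_{k-1}(G_1)$ into $\gamma_{k-1}(G_2)$, and since $G_2$ has nilpotency class $n-1$, the target $\gamma_{k-1}(G_2)$ is trivial as soon as $k>n$. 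As $\gamma_{k-1}(G_1)=Z(G_1)$ is nontrivial, a $\theta$ restricting to an isomorphism of centres can exist only when $k\le n$, which is exactly why the smaller factor must play the role of $G_1$.

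Next I would identify the amalgamated central subgroup. The centre $Z(\text{UT}_k(\f))=t_{1,k}(\f)$ is cyclic of order $p$, so $K_1\le Z(G_1)$ is either trivial or all of $Z(G_1)$, and similarly for $K_2$. If $K_1=1$ the central product is a direct product and Theorem~\ref{cp} applies with the trivial homomorphism. In the essential case $K_1=Z(G_1)=t_{1,k}(\f)$ and $K_2=Z(G_2)=t_{1,n}(\f)$, I would exhibit an explicit $\theta:\text{UT}_k(\f)\to\ut$ extending the prescribed isomorphism $K_1\to K_2$: send $t_{1,2},\dots,t_{k-2,k-1}$ to themselves and $t_{k-1,k}$ to $t_{k-1,n}$. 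Then the left-normed commutator $t_{1,k}=[t_{1,2},\dots,t_{k-1,k}]$ is sent to $[t_{1,2},\dots,t_{k-2,k-1},t_{k-1,n}]=t_{1,n}$, so $\theta$ does restrict to an isomorphism of the centres; that the assignment respects the relations \eqref{e3.1} is a routine verification.

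It then remains to check the three vanishing conditions of Theorem~\ref{cp}. The conditions $B_0(G_1)=B_0(\text{UT}_k(\f))=0$ and $B_0(G_2)=B_0(\ut)=0$ follow immediately from Case~I of Theorem~\ref{main}. For the last one I would use the equality $t_{1,k}(\f)=\text{UT}_k^{k-1}(\f)=\gamma_{k-1}(\text{UT}_k(\f))$, which identifies $G_1/K_1=\text{UT}_k(\f)/\gamma_{k-1}(\text{UT}_k(\f))=\Gamma_{k,k-1}$ in the essential case (and $G_1/K_1=\text{UT}_k(\f)$ when $K_1=1$). Either group is covered by Theorem~\ref{main}, the quotient $\Gamma_{k,k-1}$ being handled by the argument of Case~III, which is valid for every $\ell\ge1$. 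Hence $B_0(G_1/K_1)=0$, and Theorem~\ref{cp} gives $B_0(G)=0$.

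I expect the two structural observations, rather than any calculation, to be the crux: first, that the factor of smaller class must be taken as $G_1$ so that a centre-injective $\theta$ can exist at all; and second, the identification of $G_1/K_1$ with $\Gamma_{k,k-1}$ through $Z(\text{UT}_k(\f))=\gamma_{k-1}(\text{UT}_k(\f))$. With these in place, the remaining steps are the direct applications of Theorems~\ref{main} and~\ref{cp} recorded above.
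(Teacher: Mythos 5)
Your proposal is correct and follows essentially the same route as the paper: both reduce to Theorem~\ref{cp} by exhibiting an embedding $\text{UT}_k(\f)\hookrightarrow\ut$ that restricts to an isomorphism of the (cyclic, order $p$) centres, and then quote Theorem~\ref{main} for $B_0(G_1)$, $B_0(G_2)$ and $B_0(G_1/K_1)=B_0(\Gamma_{k,k-1})$. The only differences are cosmetic --- you build the embedding in one step by shifting the last column to column $n$, whereas the paper composes a chain of one-row insertions $\text{UT}_k\to\text{UT}_{k+1}\to\cdots\to\ut$ --- and your added remarks (that $k\le n$ is forced, and the degenerate case $K_1=1$) are correct and slightly more careful than the paper.
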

\begin{proof}
{\bf Case I.} Let $G=G_1*G_2$, where
$G_1=\text{UT}_{n-1}(\f)=\langle t_{ij}(\lambda)\mid 1\leq i<j\leq
n-1,\lambda\in\f\rangle$ and $G_2=\ut=\langle s_{ij}(\lambda)\mid
1\leq i<j\leq n,\lambda\in\f\rangle$. Put $K_1=\langle
t_{1,n-1}(1)\rangle=Z(G_1)\simeq\f$ and $K_2=\langle
s_{1n}(1)\rangle=Z(G_2)\simeq\f$.

From Theorem \ref{main} it follows that
$B_0(G_1)=B_0(G_2)=B_0(G_1/K_1)=0$, where
$G_1/K_1=\text{UT}_{n-1}(\f)/\text{UT}_{n-1}^{n-2}(\f)=\Gamma_{n-1,n-2}$.
Thus, according to Theorem \ref{cp}, we need to construct a
homomorphism $\theta:G_1\to G_2$ such that $\theta\vert_{K_1}:K_1\to
K_2$ is an isomorphism.

Define a map $\theta:G_1\to G_2$ by
\begin{equation*}
\theta(t_{ij}(\alpha))=\left\{
\begin{array}{ll}
s_{i,j+1}(\alpha), &\ \text{if}\ i=1,\\
s_{i+1,j+1}(\alpha), &\ \text{if}\ i\geq 2,
\end{array}\right.
\end{equation*}
where $\alpha\in\f,1\leq i<j\leq n-1$. Define also
$\theta(\prod_{i,j,\alpha}t_{ij}(\alpha))=\prod_{i,j,\alpha}\theta(t_{ij}(\alpha))$.
We are going to verify that $\theta$ is a well defined homomorphism.

Indeed, we have that
$\theta([t_{1j}(\alpha),t_{jk}(\beta)])=[s_{1,j+1}(\alpha),s_{j+1,k+1}(\beta)]=s_{1,k+1}(\alpha\beta)=\theta(t_{1k}(\alpha\beta))$
and
$\theta([t_{ij}(\alpha),t_{jk}(\beta)])=[s_{i+1,j+1}(\alpha),s_{j+1,k+1}(\beta)]=s_{i+1,k+1}(\alpha\beta)=\theta(t_{ik}(\alpha\beta))$
for $i\geq 2$. Also,
$\theta([t_{1j}(\alpha),t_{km}(\beta)])=[s_{1,j+1}(\alpha),s_{k+1,m+1}(\beta)]=1=\theta(1)$
for $k\ne j$ and
$\theta([t_{ij}(\alpha),t_{km}(\beta)])=[s_{i+1,j+1}(\alpha),s_{k+1,m+1}(\beta)]=1=\theta(1)$
for $k\ne j$ and $i\geq 2$.

Finally, observe that $\theta(t_{1,n-1})=s_{1,n}$, i.e.
$\theta\vert_{K_1}:K_1\to K_2$ is an isomorphism. From Theorem
\ref{cp} it follows that $B_0(G)=0$.

{\bf Case II.} Let $G=G_1*G_2$, where $G_1=\text{UT}_k(\f)=\langle
t_{ij}(\lambda)\mid 1\leq i<j\leq k,\lambda\in\f\rangle$ and
$G_2=\ut=\langle s_{ij}(\lambda)\mid 1\leq i<j\leq
n,\lambda\in\f\rangle$. Put $K_1=\langle
t_{1k}(1)\rangle=Z(G_1)\simeq\f$ and $K_2=\langle
s_{1n}(1)\rangle=Z(G_2)\simeq\f$.

Assume that $k\leq n$. According to Case I, we can construct a chain
of homomorphisms
\begin{equation*}
\text{UT}_k(\f)\overset{\theta_1}{\longrightarrow}\text{UT}_{k+1}(\f)\overset{\theta_2}{\longrightarrow}\cdots\overset{\theta_{n-k}}{\longrightarrow}\ut.
\end{equation*}
Their composite gives a homomorphism $\theta:G_1\to G_2$ such that
$\theta\vert_{K_1}:K_1\to K_2$ is an isomorphism. Apply Theorem
\ref{cp}


\end{proof}

\end{document}